\newcommand{\Fp}{\mathbb{F}_p}
\newcommand{\Fq}{\mathbb{F}_q}
\newcommand{\Fqk}{\mathbb{F}_{q^k}}
\newcommand{\ZetaCq}{Z(C/\mathbb{F}_q;T)}
\newcommand{\JCq}{J(C/\mathbb{F}_q)}
\newcommand{\JCp}{J(C/\mathbb{F}_p)}
\newcommand{\JCtq}{J(\tilde{C}/\mathbb{F}_q)}
\newcommand{\JCtp}{J(\tilde{C}/\mathbb{F}_p)}
\newcommand{\diffop}{{\bf\Delta}}
\newcommand{\stirling}[2]{\left\{\begin{array}{c}#1\\#2\end{array}\right\}}
\newcommand{\identity}{1_{\scriptscriptstyle{G}}}
\newtheorem{algorithm}[theorem]{Algorithm}
\begin{document}

\title{Computing $L$-series of hyperelliptic curves}
\author{Kiran S. Kedlaya\thanks{Kedlaya was supported by 
NSF CAREER grant DMS-0545904 and a Sloan Research Fellowship.}
 \and Andrew V. Sutherland}
\institute{Department of Mathematics \\ Massachusetts Institute of Technology \\
77 Massachusetts Avenue \\ Cambridge, MA 02139 \\
\email{(kedlaya|drew)@math.mit.edu}}

\maketitle

\begin{abstract}
We discuss the computation of coefficients of the $L$-series associated to a hyperelliptic curve over $\mathbb{Q}$ of genus at most 3, using point counting, generic group algorithms, and $p$-adic methods.
\end{abstract}

\section{Introduction}

For $C$ a smooth projective curve of genus $g$ defined over $\mathbb{Q}$, the $L$-function $L(C,s)$ is conjecturally (and provably for $g=1$) an entire function containing much arithmetic information about $C$.  Most notably, according to the conjecture of Birch and Swinnerton-Dyer, the order of vanishing of $L(C,s)$ at $s=1$ equals the rank of the group $J(C/\mathbb{Q})$ of rational points on the Jacobian of $C$.

It is thus natural to ask to what extent we are able to compute with the $L$-function. This splits into two subproblems:
\begin{enumerate}
\item[1.] For appropriate $N$, compute the first $N$ coefficients of the Dirichlet series expansion $L(C,s) = \prod_p L_p(p^{-s})^{-1} = \sum_{n=1}^\infty c_n n^{-s}$.
\vspace{3pt}
\item[2.] From the Dirichlet series, compute $L(C,s)$ at various values of $s$ to suitable numerical accuracy. (The Dirichlet series converges for $\mathrm{Real}(s) > 3/2$.)
\end{enumerate}
In this paper, we address problem 1 for hyperelliptic curves of genus $g \leq 3$ with a distinguished rational Weierstrass point. This includes in particular the case of elliptic curves, and indeed we have something new to say in this case; we can handle significantly larger coefficient ranges than other existing implementations.  We say nothing about problem 2; we refer instead to 
\cite{Dokchitser:ComputingSpecialValues}.

Our methods combine efficient point enumeration with generic group algorithms as discussed in the second author's PhD thesis \cite{Sutherland:Thesis}. 
For $g > 2$, we also apply $p$-adic cohomological methods, as introduced by the 
first author \cite{Kedlaya:PointCounting} and refined by 
Harvey \cite{Harvey:LargeCharacteristicKedlaya}.  Since what
we need is adequately described in these papers, we focus our presentation on the point counting and generic group techniques and use an existing $p$-adic cohomological implementation provided by Harvey.
(The asymptotically superior Schoof-Pila method  \cite{Schoof:ECPointCounting2,Pila:PointCounting} only becomes practically better far beyond the ranges we can hope to handle.)

As a sample application, we compare statistics for Frobenius eigenvalues of particular curves to theoretical predictions.  These include the Sato-Tate conjecture for $g=1$, and appropriate analogues in the Katz-Sarnak framework for $g>1$; for the latter, we find little prior numerical evidence in the literature.

\section{The Problem}
Let $C$ be a smooth projective curve over $\mathbb{Q}$ of genus $g$ .
We wish to determine the polynomial $L_p(T)$ appearing in $L(C,s) = \prod L_p(p^{-s})^{-1}$, for $p \le N$.  We consider only $p$ for which $C$ is defined and nonsingular over $\Fp$ (almost all of them), referring to \cite{Silverman:EllipticCurves2,Deninger:LFunctions} in the case of bad reduction.  The polynomial $L_q(T)$ appears as the numerator of the local zeta function
\begin{equation}\label{equation:ZetaDefinition}
\ZetaCq = \exp\left(\sum_{k=1}^{\infty}N_kT^k/k\right) = \frac{L_q(T)}{(1-T)(1-qT)},
\end{equation}
where $N_k$ counts the points on $C$ over $\Fqk$.  Here $q$ is any prime power, however we are primarily concerned with $q=p$ an odd prime.  The rationality of $\ZetaCq$ is part of the well known theorem of Weil \cite{Weil:ZetaFunction}, which also requires
\begin{equation}
L_q(T) = \sum_{i=0}^{2g}a_iT^i
\end{equation}
to have integer coefficients satisfying $a_0 = 1$ and $a_{2g-i} = p^{g-i}a_i,$ for $0 \le i < g$.  To determine $L_q(T)$, it suffices to compute $a_1,\ldots,a_g$.
%Additionally, the roots of $L_q(T)$ satisfy $\alpha\bar{\alpha}=1/q$.  

For reasons of computational efficiency we restrict ourselves to curves which may be described by an affine equation of the form $y^2 = f(x)$, where $f(x)$ is a monic polynomial of degree $d = 2g+1$ (hyperelliptic curves with a distinguished
rational Weierstrass point).
%\footnote{This implies some loss of generality when $g > 1$.  This restriction is not strictly necessary, but allows us to use the fastest formulas available for group computation in the Jacobian.}
We denote by $\JCq$ the group of $\Fq$-rational points on the Jacobian variety of $C$ over $\Fq$ (the \emph{Jacobian} of $C$ over $\Fq$), and use $\JCtq$ to denote the Jacobian of the quadratic twist of $C$ over $\Fq$.
% (defined by $y^2 = \alpha^{2g+1}f(x/\alpha)$ for $\alpha$ a nonresidue in $\Fq$).

\vspace{6pt}
\noindent
We consider three approaches to determining $L_p(T)$ for $g \le 3$:
\begin{enumerate}
\item
{\bf Point counting}: Compute $N_1$,\ldots,$N_g$ of (\ref{equation:ZetaDefinition}) by enumerating the points on $C$ over $\Fp, \mathbb{F}_{p^2},\ldots, \mathbb{F}_{p^g}$.  The coefficients $a_1,\ldots,a_g$ can then be readily derived  from (\ref{equation:ZetaDefinition}) \cite[p. 135]{Cohen:HECHECC}.  This requires $O(p^g)$ field operations.
\vspace{6pt}
\item
{\bf Group computation}: Use generic algorithms to compute $L_p(1) = \#\JCp$, and, for $g > 1$, compute $L_p(-1) = \#\JCtp$.  Then use $L_p(1)$ and $L_p(-1)$ to determine $L_p(T)$  \cite[Lemma 4]{Sutherland:Jacobian}. This involves a total of $O(p^{(2g-1)/4})$ group operations.
\vspace{6pt}
\item
% AVS corrected typo below 8/29/2012
{\bf $p$-adic methods}:  Apply extensions of Kedlaya's algorithm \cite{Kedlaya:PointCounting,Harvey:LargeCharacteristicKedlaya} to compute (modulo $p$) the characteristic polynomial $\chi(T) = T^{2g}L_p(T^{-1})$ of the Frobenius endomorphism on $\JCp$, then use generic algorithms to compute the exact coefficients of $L_p(T)$.  The asymptotic complexity is $\tilde{O}(p^{1/2})$.\footnote{For fixed $g \geq 4$, one works modulo $p^{\lfloor g/2 - 1 \rfloor}$ to obtain the same complexity.}
\end{enumerate}

Computing the coefficients of $L_p(T)$ for all $p\le N$ necessarily requires time and space exponential in $\lg N$, since the output contains $\Theta(N)$ bits.  In practice, we are limited to $N$ of moderate size: on the order of $2^{40}$ in genus 1, $2^{28}$ in genus 2, and $2^{26}$ in genus 3 (larger in parallel computations).  We expect to compute $L_p(T)$ for a large number of relatively small values of $p$.  Constant factors will have considerable impact, however we first consider the asymptotic situation.

The $O(p^g)$ complexity of point counting makes it an impractical method to compute $a_1$, \ldots, $a_g$ unless $p$ is very small.  However, point counting over $\Fp$ is an efficient way to compute $a_1 = N_1 - p - 1$ for a reasonably large range of $p$ when $g > 1$, requiring only $O(p)$ field operations.  Knowledge of $a_1$ aids the computation of $\#J(C/\Fp)$, reducing the complexity of the baby-steps giant-steps search to $O(p^{1/4})$ in genus 2 and $O(p)$ in genus 3.
The optimal strategy then varies (c.f. \cite[pp. 32-33]{Elkies:CurvesOverFiniteFields}), according to genus and range of $p$:
\vspace{-12pt}
\subsubsection{Genus 1}
The $O(p^{1/4})$ complexity of generic group computation makes it the compelling choice, easily outperforming point counting for $p>2^{10}$.
\vspace{-12pt}
\subsubsection{Genus 2}
There are three alternatives: (i) $O(p)$ field operations followed by $O(p^{1/2})$ group operations, (ii) $O(p^{3/4})$ group operations, or (iii) an $\tilde{O}(p^{1/2})$ $p$-adic computation.  We find the range in which (iii) becomes optimal to be past the feasible values of $N$.
\vspace{-12pt}
\subsubsection{Genus 3}
The choice is between (i) $O(p)$ field operations followed by $O(p)$ group operations and (ii) an $\tilde{O}(p^{1/2})$ $p$-adic computation followed by $O(p^{1/4})$ group operations.  Here the $p$-adic algorithm plays the major role once $p>2^{15}$.

\section{Point Counting}
Counting points on $C$ over $\Fp$ plays a key role in our strategy for genus 2 and 3 curves.  Moreover, it is a useful tool in its own right.  If one wishes to study the distribution of $\#J(C/\Fp) = L_p(1)$, or to simply estimate $L_p(p^{-s})$, the value $a_1$ may be all that is required.

Given $C$ in the form $y^2 = f(x)$, the simplest approach is to build a table of the quadratic residues in $\Fp$ (typically stored as a bit-vector), then evaluate $f(x)$ for all $x\in\Fp$.  If $f(x) = 0$, there is a single point on the curve, and otherwise either two points (if $f(x)$ is a residue) or none.  Additionally, we add a single point at infinity (recall that  $f$ has odd degree).  A not-too-na\"\i ve implementation computes the table of quadratic residues by squaring half the field elements, then uses $d$ field multiplications and $d$ field additions for each evaluation of $f(x)$, where $d$ is the degree of $f$.  A better approach uses finite differences, requiring only $d$ field additions (subtractions) to compute each $f(x)$.

Let $f(x) = \sum f_j x^j$ be a degree $d$ polynomial over a commutative ring $R$.  Fix a nonzero $\delta\in R$ and define the linear operator $\diffop$ on $R[x]$ by 
\begin{equation}
(\diffop f)(x) = f(x+\delta) - f(x).
\end{equation}
For any $x_0\in R[x]$, given $f(x_0)$, we may enumerate the values $f(x_0+n\delta)$ via
\begin{equation}\label{equation:FiniteDifferences}
f(x_0+(n+1)\delta) = f(x_0+n\delta) + \diffop f(x_0+n\delta).
\end{equation}
To enumerate $f(x_0+n\delta)$ it suffices to enumerate $\diffop f(x_0+n\delta)$, which we also do via (\ref{equation:FiniteDifferences}), replacing $f$ with $\diffop f$.  Since $\diffop^{d+1}f$ = 0, each step requires only $d$ additions in $R$, starting from the initial values $\diffop^kf(x_0)$ for $0\le k \le d$.

When $R = \Fp$, this process enumerates $f(x)$ over the entire field and we simply set $\delta = 1$ and $x_0 = 0$.
%For non-prime fields of size $p^h$ one uses $p^{h-1}$ suitable starting points and enumerate $p$ values of $f(x)$ from each, achieving essentially the same complexity of $d$ field additions per evaluation, assuming $p \gg d$.
As subtraction modulo $p$ is typically faster than addition, instead of (\ref{equation:FiniteDifferences}) we use
\begin{equation}
f(x_0+(n+1)\delta) = f(x_0+n\delta) - (-\diffop f)(x_0+n\delta).
\end{equation}
The necessary initial values are then $(-1)^k\diffop^k f(0)$.

\vspace{6pt}
\begin{algorithm}[Point Counting over $\Fp$]\label{algorithm:PointCounting}
Given a polynomial $f(x)$ over $\Fp$ of odd degree $d$ and a vector $M$ identifying nonzero quadratic residues in $\Fp$:
\vspace{3pt}
\rm
\renewcommand\labelenumi{\theenumi.}
\begin{enumerate}
\item
Set $t_k \leftarrow (-1)^k\diffop^kf(0)$, for $0\le k \le d$, and set $N \leftarrow 1$.
\item
\vspace{6pt}
For $i$ from 1 to $p$:
\begin{enumerate}
\vspace{3pt}
\item
If $t_0 = 0$, set $N \leftarrow N + 1$, and if $M[t_0]$, set $N\leftarrow N + 2$.
\item
Set $t_0 \leftarrow t_0 - t_1$, $t_1 \leftarrow t_1 - t_2$, \ldots, and $t_{d-1} \leftarrow t_{d-1} - t_d$.
\end{enumerate}
\end{enumerate}
\it
\vspace{3pt}
Output $N$.
\end{algorithm}
\vspace{6pt}

The computation $t_k = t_k - t_{k+1}$ is performed using integer subtraction, adding $p$ if the result is negative.
The map $M$ is computed by enumerating the polynomial $f(x) = x^2$ for $x$ from 1 to $(p-1)/2$ and setting $M[f(x)] = 1$, using a total of $p$ subtractions (and no multiplications).

The size of $M$ may be cut in half by only storing residues less than $p/2$.  One then uses $M[\min(t_0,p-t_0)]$, inverting $M[p-t_0]$ when $p\equiv 3 \bmod 4$.  This slows down the algorithm, but is worth doing if $M$ exceeds the size of cache memory.
%Since we have $\diffop^k = (\shiftop-{\bf 1})^k$ and $\shiftop^m f(0)= f(m) = \sum f_j m^j$, it follows that
%\begin{align}
%\diffop^k f(0) &= \sum_m \binom{k}{m}(-1)^{k-m}\sum_j f_j m^j\\
%					&= \sum_j \left(\sum_m \binom{k}{m}(-1)^{k-m}m^j\right) f_j\\\label{equation:StirlingIdentity}
%					&= \sum_j k!\stirling{j}{k} f_j = \sum_j T_{j,k} f_j
%\end{align}
%The identity applied in (\ref{equation:StirlingIdentity}) appears in \cite[p. 265]{Graham:ConcreteMathematics}.

It remains only to compute $\diffop^kf(0)$.  We find that
\begin{equation}\label{equation:StirlingIdentity}
\diffop^k f(0) = \sum_j k!\stirling{j}{k} f_j = \sum_j T_{j,k} f_j,
\end{equation}
where the bracketed coefficient denotes a Stirling number of the second kind.  The triangle of values $T_{j,k}$ is represented by sequence A019538 in the OEIS \cite{Sloane:OEIS}. Since (\ref{equation:StirlingIdentity}) does not depend on $p$, it is computed just once for each $k\le d$.

In the process of enumerating $f(x)$, we can also enumerate $f(x)+g(x)$ with $e+1$ additional field subtractions, where $e$ is the degree of $g(x)$.  The case where $g(x)$ is a small constant is particularly efficient, since nearby entries in $M$ are used.  The last two columns in Table \ref{table:PointCounting} show the amortized cost per point of applying this approach to the curves $y^2 = f(x)$, $f(x)+1$, \ldots, $f(x)+31$.

\begin{table}
\begin{center}
\begin{tabular}{@{}lcrrcrrcrr@{}}
%\toprule
&  &\multicolumn{2}{c}{Polynomial}& &\multicolumn{2}{c}{Finite}& &\multicolumn{2}{c}{Finite}\\
&  &\multicolumn{2}{c}{Evaluation}& &\multicolumn{2}{c}{Differences}& &\multicolumn{2}{c}{Differences $\times 32$}\\
\cmidrule(r){3-4}\cmidrule(r){6-7}\cmidrule(r){9-10}
$p\approx$& & Genus 2 & \hspace{6pt}Genus 3 & & Genus 2 & \hspace{6pt}Genus 3 & & Genus 2 & \hspace{6pt}Genus 3\\
\midrule
$2^{16}$&\hspace{12pt}	& 195.1 & 257.2 &\hspace{18pt} & 6.1  &  7.8 &\hspace{18pt} & 1.1 & 1.1\\
$2^{17}$&	& 196.3 & 262.6 & & 6.0  &  6.9 & & 1.1 & 1.1\\
$2^{18}$&	& 192.4 & 259.8 & & 6.0  &  6.8 & & 1.1 & 1.1\\
$2^{19}$&	& 186.3 & 251.1 & & 6.0  &  6.8 & & 1.1 & 1.1\\
$2^{20}$&	& 187.3 & 244.1 & & 7.2  &  8.0 & & 1.1 & 1.3\\
$2^{21}$&	& 172.3 & 240.8 & & 8.8  &  9.4 & & 1.2 & 1.3\\
\midrule
$2^{22}$&	& 197.9 & 233.9 & & 12.1 & 13.4 & & 1.2 & 1.3\\
$2^{23}$&	& 229.2 & 285.8 & & 12.8 & 14.6 & & 2.6 & 2.7\\
$2^{24}$&	& 258.1 & 331.8 & & 41.2 & 44.0 & & 3.5 & 4.7\\
\midrule
$2^{25}$&	& 304.8 & 350.4 & & 53.6 & 55.7 & & 4.8 & 4.9\\
$2^{26}$&	& 308.0 & 366.9 & & 65.4 & 67.8 & & 4.8 & 4.6\\
$2^{27}$&	& 318.4 & 376.8 & & 70.5 & 73.1 & & 4.9 & 5.0\\
$2^{28}$&	& 332.2 & 387.8 & & 74.6 & 76.5 & & 5.1 & 5.2\\
\bottomrule
\vspace{2pt}
\end{tabular}
\caption{Point counting $y^2 = f(x)$ over $\Fp$ (CPU nanoseconds/point)}\label{table:PointCounting}
\end{center}
\vspace{-12pt}
\begin{minipage}{1.0\linewidth}
\small
The middle rows of Table \ref{table:PointCounting} show the transition of $M$ from $L2$ cache to general memory.  The top section of the table is the most relevant for the algorithms considered here, as asymptotically superior methods are used for larger $p$.
\normalsize
\end{minipage}
\end{table}

\section{Group Computations}\label{section:GroupComputations}

The performance of generic group algorithms is typically determined by two quantities: the time required to perform a group operation, and the number of operations performed.  We briefly mention two techniques that reduce the former, then consider the latter in more detail.
\vspace{-6pt}
\subsection{Faster Black Boxes}

The performance of the underlying finite field operations used to implement the group law on the Jacobian can be substantially improved using a Montgomery representation to perform arithmetic modulo $p$ \cite{Montgomery:ModularArithmetic}.  Another optimization due to Montgomery that is especially useful for the algorithms considered here is the simultaneous inversion of field elements (see \cite[Alg. 11.15]{Cohen:HECHECC}).\footnote{This algorithm can be applied to any group.}  With an affine representation of the Jacobian each group operation requires a field inversion, but uses fewer multiplications than alternative representations.  To ameliorate the high cost of field inversions, we then modify our algorithms to perform group operations ``in parallel".

In the baby-steps giant-steps algorithm, for example, we fix a small constant $n$, compute $n$ ``babies" $\beta$, $\beta^2$, \ldots, $\beta^n$, then march them in parallel using steps of size $n$ (the giant steps are handled similarly).  In each parallel step we execute $n$ group operations to the point where a field inverse is required, perform all the field inversions together for a cost of $3n-3$ multiplications and one inversion, then use the results to complete the group operations.  Exponentiation can also benefit from parallelization, albeit to a lesser extent.

These two optimizations are most effective when applied in combination, as may be seen in Table \ref{table:BlackBox}.

\begin{table}
\begin{center}
\begin{tabular}{@{}llcrrrcrrr@{}}
%\toprule
& & &\multicolumn{3}{c}{Standard}&\hspace{18pt} &\multicolumn{3}{c}{Montgomery}\\
\cmidrule(r){4-6}\cmidrule(r){8-10}
$g$\hspace{6pt} & $\hspace{12pt}p$&\hspace{12pt} & $\times 1$ & \hspace{6pt}$\times 10$ & \hspace{6pt}$\times 100$ &
& \hspace{6pt}$\times 1$ & \hspace{6pt}$\times 10$ & \hspace{6pt}$\times 100$ \\
\midrule
1 & $2^{20}+7$  & &  501 &  245 &  215 &&  239 &   89 &   69\\
1 & $2^{25}+35$ & &  592 &  255 &  216 &&  286 &   93 &   69\\
1 & $2^{30}+3$  & &  683 &  264 &  217 &&  333 &   98 &   69\\
\midrule
2 & $2^{20}+7$  & & 1178 &  933 &  902 &&  362 &  216 &  196\\
2 & $2^{25}+35$ & & 1269 &  942 &  900 &&  409 &  220 &  197\\
2 & $2^{30}+3$  & & 1357 &  949 &  902 &&  455 &  225 &  196\\
\midrule
3 & $2^{20}+7$  & & 2804 & 2556 & 2526 &&  642 &  498 &  478\\
3 & $2^{25}+35$ & & 2896 & 2562 & 2528 &&  690 &  502 &  476\\
3 & $2^{30}+3$  & & 2986 & 2574 & 2526 &&  736 &  506 &  478\\
\bottomrule
\vspace{2pt}
\end{tabular}
\caption{Black box performance (CPU nanoseconds/group operation)}\label{table:BlackBox}
\end{center}
\vspace{-12pt}
\begin{minipage}{1.0\linewidth}
\small
The heading $\times n$ indicates $n$ group operations performed ``in parallel".  All times are for a single thread of execution. 
\normalsize
\end{minipage}
\end{table}

\subsection{Generic Order Computations}

Our approach to computing $\#\JCq = L_q(1)$ is based on a generic algorithm to compute the structure of an arbitrary abelian group \cite{Sutherland:Thesis}.  We are aided both by absolute bounds on $L_q(1)$ derived from the Weil conjectures (theorems), as well as predictions regarding its distribution within these bounds based on a generalized form of the Sato-Tate conjecture (proven for most genus 1 curves over $\mathbb{Q}$ in \cite{Harris:SatoTateProof}).  We first consider the general algorithm.

We assume we have a black box for an abelian group $G$ (written multiplicatively) that can generate uniformly random group elements.  For Jacobians, these can be obtained via decompression techniques \cite[14.1-2]{Cohen:HECHECC}.\footnote{This becomes costly when $g > 2$, where we use the simpler approach of \cite[p. 307]{Cohen:HECHECC}.}  We also suppose we are given bounds $M_0$ and $M_1$ such that $M_0\le |G| \le M_1$.
%These will be derived from the Weil bounds (\ref{equation:abounds}), and may incorporate knowledge of $a_1$ (see Proposition \ref{proposition:a2Bounds}).

The first (typically only) step is to compute the group exponent, $\lambda(G)$, the least common multiple of the orders of all the elements of $G$.  This is accomplished by initially setting $E = 1$, and for a random $\alpha\in G$, computing the order of $\beta = \alpha^E$ using a baby-steps giant-steps search on the interval $[M_0/E,M_1/E]$.  We then update $E\leftarrow|\beta|E$ and repeat the process until either (1) there is only one multiple of $E$ in the interval $[M_0,M_1]$, or (2) we have generated $c$ random elements, where $c$ is a confidence parameter.  In the former case we must have $|G| = E$,
%since $E$ divides $\lambda(G)$ which divides $|G|$,
and in the latter case $E = \lambda(G)$, with probability greater than $1-2^{2-c}$ \cite[Proposition 8.3]{Sutherland:Thesis}.  For large Jacobians, (1) almost always applies, however for the relatively small groups considered here, (2) arises more often, particularly when $g > 1$.  Fortunately, this does not present undue difficulty.

\begin{proposition}\label{proposition:GroupOrder}
Given $\lambda(G)$ and $M_0$ such that $M_0\le |G| < 2M_0$, the value of $|G|$ can be computed using $O(|G|^{1/4})$ group operations.
\end{proposition}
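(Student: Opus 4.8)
The plan is to leverage the structure of the abelian group $G$, which for a Jacobian decomposes as a product of cyclic groups. Since $\lambda(G)$ is the group exponent, the quotient $|G|/\lambda(G)$ is a positive integer; call it $m$. Because $G$ embeds in $(\mathbb{Z}/\lambda(G)\mathbb{Z})^{2g}$ (the Jacobian has rank at most $2g$ over any field, by the Weil bounds on the characteristic polynomial degree), we have $m \le \lambda(G)^{2g-1}$, but more usefully $|G| = m\lambda(G)$ with $m$ a product of at most $2g-1$ invariant factors each dividing $\lambda(G)$. The key observation is that $m$ lies in the narrow window $[M_0/\lambda(G), 2M_0/\lambda(G))$, so finding $m$ amounts to a bounded search for an integer multiplier; once $\lambda(G)$ is known, $|G|$ is pinned down as soon as we isolate $m$.

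The main steps I would carry out: First, reduce to the case where $\lambda(G) \le |G|^{1/2}$. If instead $\lambda(G) > |G|^{1/2}$, then the interval $[M_0/\lambda(G), 2M_0/\lambda(G))$ has length less than $|G|^{1/2}/\lambda(G) < 1$ when $\lambda(G)^2 > 2M_0 > |G|$, so at most one integer multiplier survives and $|G|$ is determined outright with no further group operations. Second, in the remaining case $\lambda(G) \le |G|^{1/2} \le (2M_0)^{1/2}$, I would pick a random $\alpha \in G$ and note that the order of $\alpha$ divides $\lambda(G)$, while the index $[G : \langle\alpha\rangle]$ tends to be small. More precisely, I would use the standard fact (from \cite{Sutherland:Thesis}) that after a few random samples the subgroup generated has index bounded by a small constant with high probability; then a baby-steps giant-steps search for the exact order of $\alpha$ on the interval $[M_0/1, M_1/1]$ intersected with the multiples of $|\alpha|$ costs $O((M_1/M_0 \cdot |G|/|\alpha|)^{1/2})$ — but since $|\alpha|$ is a divisor of $\lambda(G) \le |G|^{1/2}$, and the cofactor search ranges over a set of size $O(|G|/|\alpha|)$ with $|\alpha| \ge$ a constant fraction of $\lambda(G)$, this is $O(|G|^{1/4})$.

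More carefully, the cleanest route is: compute (via BSGS on the length-$M_0/\lambda(G)$ window, which has $O(M_0/\lambda(G))$ integers) the set $S$ of candidate multipliers $m$ such that $m\lambda(G) \in [M_0, M_1)$; this costs nothing in group operations, it is pure arithmetic. Then for a random $\alpha$, test which candidates $n \in S\lambda(G)$ satisfy $\alpha^n = \identity$ — but do this efficiently with a single baby-steps giant-steps pass: precompute $\alpha^{\lambda(G)}$ (cost $O(\lg\lambda(G))$ operations), observe $\alpha^{\lambda(G)}$ has order dividing $m$, and search for its order on $[1, m_{\max}]$ where $m_{\max} = M_1/\lambda(G) = O(|G|/\lambda(G)) = O(|G|^{1/2})$, costing $O(|G|^{1/4})$ operations. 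This order, call it $m'$, satisfies $m' \mid m$; iterating over a confidence parameter's worth of random $\alpha$ and taking the lcm of the resulting $m'$ values recovers $\lambda(G')$ for the subgroup $G'$ generated, and since $|G| = m\lambda(G)$ with $m$ forced into a length-$O(|G|^{1/2}/\lambda(G))$ interval by the hypothesis $M_0 \le |G| < 2M_0$, once we know $\lambda(G)$ and that $m$ is a multiple of the accumulated lcm, uniqueness of $m$ in its interval finishes the computation. The total is $O(|G|^{1/4})$ group operations.

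The main obstacle is the second bullet: controlling the index $[G : \langle\alpha_1,\ldots,\alpha_c\rangle]$ after sampling a constant number of random elements, and in particular handling the genus-3 case where $G$ may have up to $2g = 6$ invariant factors, so that a single random element can have order as small as $\lambda(G)$ while the full group is a cube of that size — meaning the naive "cofactor" $m$ is itself as large as $|G|^{1/2}$, and a BSGS search over an interval of that length is exactly at the $O(|G|^{1/4})$ budget and no larger. I expect this to go through precisely because of the tight hypothesis $|G| < 2M_0$: it forces the multiplier interval to have length $O(|G|^{1/2}/\lambda(G)) \le O(|G|^{1/4})$ when $\lambda(G) \ge |G|^{1/4}$, and when $\lambda(G) < |G|^{1/4}$ one instead has $|G|/\lambda(G)$ large but the BSGS on the multiplier, of cost $O((|G|/\lambda(G))^{1/2})$, must be bounded by combining with the fact that $m \le \lambda(G)^{2g-1}$ so that actually $|G| \le \lambda(G)^{2g}$, giving $\lambda(G) \ge |G|^{1/(2g)}$ and hence the multiplier search costs $O(|G|^{(2g-1)/(4g)})$ — which for $g \le 3$ is $O(|G|^{5/12})$, \emph{worse} than claimed, so the argument genuinely needs the index-control input from \cite{Sutherland:Thesis} to guarantee $|\alpha| \gg \lambda(G)$ cannot be escaped by, rather, using that $\langle\alpha\rangle$ has order $\ge c\,|G|^{1/2}$ with high probability for some absolute $c$, which is the real content to be imported.
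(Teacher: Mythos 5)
There is a genuine gap, and in fact the central computational step of your ``cleanest route'' cannot work as written: since $\lambda(G)$ is the exponent of $G$, we have $\alpha^{\lambda(G)} = \identity$ for \emph{every} $\alpha\in G$, so searching for the order of $\alpha^{\lambda(G)}$ yields no information whatsoever about the multiplier $m=|G|/\lambda(G)$. More fundamentally, $m$ is not detectable from element orders at all --- it reflects the invariant-factor structure of $G$, and no amount of sampling random $\alpha$ and computing $|\alpha|$ (all of which divide $\lambda(G)$) can distinguish, say, $\mathbb{Z}/n\mathbb{Z}$ from $(\mathbb{Z}/n\mathbb{Z})^2$. Your own fallback analysis honestly arrives at $O(|G|^{(2g-1)/(4g)})$, which is worse than $O(|G|^{1/4})$ for $g\ge 2$, and the rescue you hope to import --- that a random cyclic subgroup has order at least $c\,|G|^{1/2}$ with high probability --- is false: for $G\cong(\mathbb{Z}/n\mathbb{Z})^{2g}$ every cyclic subgroup has order at most $|G|^{1/(2g)}$. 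So the approach does not close.

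The paper's argument goes through the Sylow decomposition instead, which is the idea you are missing. The primes dividing $|G|$ are exactly those dividing $\lambda(G)$, so one runs the generic group-structure algorithm of \cite{Sutherland:Thesis} separately on each $p$-Sylow subgroup $H_p$ (by projecting random elements via $\alpha\mapsto\alpha^{\lambda(G)/p^{e_p}}$ and computing the full structure, i.e.\ \emph{several} independent generators, not just one order); this costs $O(|H_p|^{1/2})$ group operations. Any Sylow computation is abandoned as soon as it is seen to exceed $\sqrt{|G|}$, which can happen for at most one prime $p_0$, and every computation (including the abandoned one) therefore costs $O(|G|^{1/4})$. This is where the hypothesis $M_0\le |G| < 2M_0$ enters: the unknown $|H_{p_0}|$ is a power of $p_0$, and consecutive powers of $p_0$ differ by a factor of at least $2$, so exactly one choice of $|H_{p_0}|$ places $\prod_p |H_p|$ in $[M_0,2M_0)$. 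The interval hypothesis pins down one large prime-power Sylow order, rather than narrowing a ``multiplier window'' as in your sketch.
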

\begin{proof}[sketch]
The bounds on $|G|$ imply that it is enough to know the order of all but one of the $p$-Sylow subgroups of $G$ (the $p$ dividing $|G|$ are obtained from $\lambda(G)$).  Following Algorithm 9.1 of \cite{Sutherland:Thesis}, we use $\lambda(G)$ to compute the order of each $p$-Sylow subgroup $H\subseteq G$ using $O(|H|^{1/2})$ group operations; however, we abandon the computation for any $p$-Sylow subgroup that proves to be larger than $\sqrt{|G|}$.  This can happen at most once, and the remaining successful computations uniquely determine $|G|$ within the interval $[M_0,2M_0)$. $\Box$
\end{proof}

From the Weil interval (see (\ref{equation:WeilInterval}) in section \ref{subsection:BSGS2}) we find that $M_1 < 2M_0$ for all $q > 300$ and $g\le 3$.
%On the other hand, we always have $M_1-M_0 > p^{g/2} = \sqrt{|G|}$, and our first baby-step giant-step search will use $\Omega(|G|^{1/4})$ group operations.
Proposition \ref{proposition:GroupOrder} implies that group structure computations will not impact the complexity of our task.  Indeed, computing $\#J(C/\Fq)$ is almost always dominated by the first computation of $|\beta|$.

Given $\beta\in G$ and the knowledge that the interval $[M_0,M_1]$ contains an integer $M$ for which $\beta^M = \identity$, a baby-steps giant-steps search may be used to find such an $M$.  This is not necessarily the order of $\beta$, it is a multiple of it.  We can then factor $M$ and compute $|\beta|$ using $\tilde{O}(\lg M)$ group operations \cite[Ch. 7]{Sutherland:Thesis}.  The time to factor $M$ is negligible in genus 2 and 3 (compared to the group computations), and in genus 1 we note that if a sieve is used to enumerate the primes up to $N$, the factorization of every $M$ in the interval $[M_0,M_1]$ can be obtained at essentially no additional cost, using $O(\sqrt{N})$ bytes of memory.

An alternative approach avoids the computation of $|\beta|$ from $M$ by attempting to prove that $M$ is the only multiple of $|\beta|$ in the interval.  Write $[M_0,M_1]$ as $[C-R,C+R]$, and suppose the search to find $M=C\pm r$ has shown $\beta^{n} \ne \identity$ for all n in $(C-r,C+r)$.  If $M$ is not the only multiple of $|\beta|$ in $[C-R,C+R]$, then $|\beta|$ is a divisor of $M$ satisfying $2r \le |\beta|\le R+r$.
In particular, if $P$ is the largest prime factor of $M$ and $P > R+r$ and $M/P < 2r$, then $M$ must be unique.  When $R = O(M^{1/2})$ this happens fairly often (about half the time).  When it does not happen, one can avoid an $\tilde{O}(\lg M)$ order computation at the cost of $O(R^{1/2})$ group operations by searching the remainder of the interval \emph{on the opposite side of $M$}.  This is only worthwhile when $R$ is quite small, but can be helpful in genus 1.\footnote{These ideas were sparked by a conversation with Mark Watkins, who also credits Geoff Bailey.}

\vspace{-6pt}
\subsection{Optimized Baby-Steps Giant-Steps in the Jacobian - Part I}

The Mumford representation of $\JCq$ uniquely represents a reduced divisor of the curve $y^2 = f(x)$ by a pair of polynomials $(u,v)$.  The polynomial $u$ is monic, with degree at most $g$, % ${\rm deg}(v) < {\rm deg}(u)$,
and divides $v^2-f$ \cite[p. 307]{Cohen:HECHECC}.  The inverse of $(u,v)$ is simply $(u,-v)$, which makes two facts immediate:
\begin{enumerate}
\item
The cost of group inversions is effectively zero.
\item
The element $(u,v)$ has order 2 if and only if $v=0$ and $u$ divides $f$.
\end{enumerate}
Fact 1 allows us to apply the usual optimization for fast inverses \cite[p. 250]{Cohen:CANT}, reducing the number of group operations by a factor of $\sqrt{2}$ (we no longer count inversions).  Fact 2 gives us a bijection between the 2-torsion subgroup of $\JCq$ and polynomials dividing $f$ of degree at most $g$ (exactly half the polynomials dividing $f$).  If $k$ counts the irreducible polynomials in the unique factorization of $f$, then the 2-rank of $\JCq$ is $k-1$ and $2^{k-1}$ divides $\#\JCq$.\footnote{Computing $k$ requires only a distinct-degree factorization of $f$, see \cite[Alg. 3.4.3]{Cohen:CANT}.}

When $k > 1$, we start with $E = 2^{k-1}$ in our computation of $\lambda(G)$ above, reducing the number of group operations by a factor of $2^{(k-1)/2}$.  Otherwise, we know $\#\JCq$ is odd and can reduce the number of group operations by a factor of $\sqrt{2}$.  The total expected benefit of fast inversions and knowledge of 2-rank is at least a factor of 2.10 in genus 1, 2.31 in genus 2, and 2.48 in genus 3.
\vspace{-6pt}
\subsection{Optimized Baby-Steps Giant-Steps in the Jacobian - Part II}\label{subsection:BSGS2}

We come now to the most interesting class of optimizations, those based on the distribution of $\#\JCq$.  The Riemann hypothesis for curves (proven by Weil) states that $L_q(T)$ has roots lying on a circle of radius $q^{-1/2}$ about the origin of the complex plane.  As $L_q(T)$ is a real polynomial of even degree with $L_q(0) = 1$, these roots may be grouped into conjugate pairs.

\begin{definition}
A \em{unitary symplectic polynomial} $p(z)$ is a real polynomial of even degree
with roots $\alpha_1,...\alpha_g,\bar{\alpha}_1,...\bar{\alpha}_g$ all on the unit circle.
\end{definition}

The unitary symplectic polynomials are precisely those arising as the characteristic polynomial of a unitary symplectic matrix.  The Riemann hypothesis for curves implies that $p(z) = L_q(zq^{-1/2})$ is a unitary symplectic polynomial.
%The unitary symplectic polynomials form a compact set.  Indeed,
The coefficients of $p(z) = \sum a_jz^j$ may be bounded by
\begin{equation}\label{equation:abounds}
|a_j| \le \binom{2g}{j}.
\end{equation}
The corresponding bounds on the coefficients of $L_q(T)$ constrain the value of $L_q(1) = \#\JCq$, yielding the Weil interval
\begin{equation}\label{equation:WeilInterval}
(\sqrt{q}-1)^{2g} \le \#\JCq \le (\sqrt{q}+1)^{2g}.
\end{equation}
For the $a_j$ with $j$ odd, the well known bounds in (\ref{equation:abounds}) are tight, however for even $j$ they are not.  We are particularly interested in the coefficient $a_2$.

\begin{proposition}\label{proposition:a2Bounds}
Let $p(z) = \sum a_jz^j$ be a unitary symplectic polynomial of degree $2g$.  For fixed $a_1$, $a_2$ is bounded by an interval of radius at most $g$.  In fact
\begin{align}\label{equation:a2UpperBound}
a_2 &\le g + \left(\frac{g-1}{2g}\right)a_1^2;\\\label{equation:a2LowerBound}
a_2 &\ge -g + 2 + \left(a_1^2-\delta^2\right)/2.
\end{align}
The value $\delta\le 2$ is the distance from $a_1$ to the nearest integer congruent to $0\bmod 4$ (when $g$ is odd), or $2\bmod 4$ (when $g$ is even).
\end{proposition}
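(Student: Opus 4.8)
The plan is to parametrize $p(z)$ by the real parts of its roots and reduce both bounds to an extremal problem for a single sum of squares. Since $p$ is monic of degree $2g$ with $p(0)=1$ and its roots occur in pairs $\alpha_j,\bar\alpha_j$ on the unit circle, we may write $p(z)=\prod_{j=1}^{g}\bigl(z^2-x_jz+1\bigr)$ where $x_j=\alpha_j+\bar\alpha_j=2\operatorname{Re}\alpha_j\in[-2,2]$. Comparing coefficients, $a_1=-\sum_j x_j$ and $a_2=g+\sum_{i<j}x_ix_j$ (the $g$ coming from the choice of the $z^2$ term in one of the $g$ factors); since $\sum_{i<j}x_ix_j=\tfrac12\bigl((\sum_j x_j)^2-\sum_j x_j^2\bigr)$, putting $P=\sum_j x_j^2$ we obtain
\[
a_2 = g + \tfrac12\bigl(a_1^2 - P\bigr).
\]
Hence, with $a_1$ (equivalently $s:=-a_1=\sum_j x_j$) fixed, bounding $a_2$ from above and below amounts to bounding $P$ from below and above over the polytope $\mathcal P=\{x\in[-2,2]^g:\sum_j x_j=s\}$, which is nonempty since $|a_1|=|\sum_j x_j|\le 2g$ (see also (\ref{equation:abounds})).

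For the upper bound on $a_2$ I would invoke convexity: by Cauchy--Schwarz (equivalently the power-mean inequality), $P=\sum_j x_j^2\ge(\sum_j x_j)^2/g=a_1^2/g$ on all of $\mathcal P$, with equality at the feasible point $x_1=\dots=x_g=s/g$. Substituting into the displayed identity gives $a_2\le g+\tfrac12(a_1^2-a_1^2/g)=g+\tfrac{g-1}{2g}a_1^2$, which is (\ref{equation:a2UpperBound}).

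For the lower bound on $a_2$ I would maximize the convex function $P$ over the compact polytope $\mathcal P$; the maximum is attained at a vertex, and at any vertex all but at most one coordinate equals $\pm2$. Write such a vertex as $m$ coordinates equal to $2$, $n$ equal to $-2$, and one free coordinate $t\in[-2,2]$ with $m+n\le g-1$ (or the degenerate case $m+n=g$, $t\in\{\pm2\}$); then $2(m-n)+t=s$ and $P=4(m+n)+t^2\le 4(g-1)+t^2$. Because $m-n\equiv m+n\pmod2$ with $m+n\le g-1$, the integer $2(m-n)$ is forced to be $\equiv 2(g-1)\pmod 4$, i.e.\ $\equiv 0\bmod 4$ when $g$ is odd and $\equiv 2\bmod 4$ when $g$ is even. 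Thus $t=s-2(m-n)$ can be made to have absolute value equal to the distance $\delta$ from $s$ to the nearest such integer --- equivalently the distance from $a_1$, those residue classes being symmetric about $0$ --- but no larger while keeping $|t|\le2$; one checks the required $(m,n)$ is admissible throughout the Weil interval, and that the degenerate case $m+n=g$ arises precisely when $\delta=2$, where it gives the same value $4g=4(g-1)+\delta^2$. Hence $P\le 4(g-1)+\delta^2$, and substituting yields $a_2\ge -g+2+\tfrac12(a_1^2-\delta^2)$, which is (\ref{equation:a2LowerBound}). Subtracting the two bounds shows $a_2$ lies in an interval of width $2g-2+\tfrac{\delta^2}{2}-\tfrac{a_1^2}{2g}\le 2g$ since $\delta\le2$, i.e.\ of radius at most $g$.

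The main obstacle is the combinatorial optimization in the lower bound: identifying the vertex of $\mathcal P$ that maximizes $P$ and, in particular, showing that the extremal free coordinate has absolute value exactly $\delta$ --- this is precisely where the parity of $g$ and the mod-$4$ condition defining $\delta$ enter. Verifying admissibility of that vertex near the endpoints $|a_1|=2g$ of the Weil interval takes a little care but is otherwise routine; the remaining steps are direct computations.
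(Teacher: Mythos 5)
Your proposal is correct and follows essentially the same route as the paper: write $p(z)=\prod_j(z^2-\beta_j z+1)$ with $\beta_j=2\cos\theta_j\in[-2,2]$, use the identity $a_2=g+\tfrac12(a_1^2-\sum_j\beta_j^2)$, minimize $\sum_j\beta_j^2$ by equidistribution for the upper bound, and maximize it at the configuration with all but one $\beta_j$ equal to $\pm2$ for the lower bound. The only difference is one of detail: the paper simply asserts these extremal configurations, whereas you justify them via convexity and a vertex/parity analysis of the polytope, which is where the mod-$4$ definition of $\delta$ is actually explained.
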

\begin{proof}
Define $\beta_j = \alpha_j + \bar{\alpha}_j$ for $1\le j \le g$, where the $\alpha_j$ are the roots of $p(z)$.  Then $a_1 = \sum \beta_j$ and $a_2 = g + (a_1^2-t_2)/2$, where $t_2 = \sum \beta_j^2$.  For fixed $a_1$, $t_2$ is minimized by $\beta_j = a_1/g$, yielding (\ref{equation:a2UpperBound}), and $t_2$ is maximized by $\beta_j = \pm 2$ for $j<g$ and $\beta_g = \delta$, yielding (\ref{equation:a2LowerBound}) (note that $|\beta_j|\le 2$).  The proposition follows. $\Box$
\end{proof}
%Both bounds in Proposition \ref{proposition:a2Bounds} are tight.

We have as a corollary, independent of $a_1$, the bound $a_2 \ge -g$, and for $g$ odd, $a_2 \ge 2-g$.  In genus 2, the proposition reduces to Lemma 1 of \cite{Matsuo:BabyStepPointCounting}, however we are especially interested in the genus 3 case, where our estimate of $a_2$ will determine the leading constant factor in the time to compute $\#\JCq$.  In genus 3, Proposition \ref{proposition:a2Bounds} constrains $a_2$ to an interval of radius 3 once $a_1$ is known, whereas (\ref{equation:abounds}) would give a radius of 15.

Having bounded the interval as tightly as possible, we consider the search within.  We suppose we are seeking the value of a random variable $X$ with some distribution over $[M_0,M_1]$.  We assume that we start from an initial estimate $M$ and search outward in both directions using a standard baby-steps giant-steps search with all baby steps taken first (see \cite{Stein:BabyGiant} for a more general analysis).  Ignoring the boundaries, the cost of the search is
\begin{equation}
c = s + 2{\bf\vert} X-M {\bf\vert}/s
\end{equation}
group operations.  As our cost function is linear in $|X-M|$, we minimize the mean absolute error in our estimate by setting $M$ to the median value of $X$ and $s = \sqrt{2E}$, where $E$ is the expectation of $\vert X-M\vert$. This holds for any distribution on $X$, we simply need the median value of $X$ and its expected distance from it.

If we consider $p(z) = L_q(zq^{-1/2})$ as a ``random'' unitary symplectic polynomial, a natural distribution for $p(z)$ can be derived from the Haar measure on the compact Lie group $USp(2g)$ (the group of $2g\times 2g$ matrices over $\mathbb{C}$ that are both unitary and symplectic).  Each $p(z)$ corresponds to a conjugacy class of matrices with $p(z)$ as their characteristic polynomial.  Let the eigenvalues of a random matrix in $USp(2g)$ be $e^{\pm i\theta_1}$, \ldots, $e^{\pm i\theta_g}$, with $\theta_j \in [0,\pi)$.  The joint probability density function on the $\theta_j$ given by the Haar measure on $USp(2g)$ is
\begin{equation}\label{EigenvalueDistribution}
\mu(USp(2g)) = \frac{1}{g!}\left(\prod_{j<k}\left(2\cos\theta_j - 2\cos\theta_k\right)\right)^2\prod_j\frac{2}{\pi}\sin^2\theta_j d\theta_j.
\end{equation}
This distribution is derived from the Weyl integration formula \cite[p. 218]{Weyl:ClassicalGroups} and can be found in \cite[p. 107]{Katz:RandomMatrices}.  For $g=1$, this simplifies to $(2/\pi)\sin^2\theta d\theta$, which corresponds to the Sato-Tate distribution.  We may apply (\ref{EigenvalueDistribution}) to compute various statistical properties of random unitary symplectic polynomials.  The coefficient $a_1$ is simply the negative sum of the eigenvalues,
\begin{equation}
a_1 = - \sum_{j=1}^g 2\cos\theta_j,
\end{equation}
and we find that the median (and expectation) of $a_1$ is 0.  In genus 1, the expected distance of $a_1$ from its median is
\begin{equation}
{\bf E}\left[|a_1|\right] = \frac{2}{\pi}\int_0^\pi |2\cos\theta|\sin^2\theta d\theta = \frac{8}{3\pi}.
\end{equation}
The value $8/(3\pi) \approx 0.8488$ is not much smaller than $1$, which corresponds to a uniform distribution, so the potential benefit is small in genus 1.  In genus 2, however, the expected distance of $a_1$ from its median is $4096/(625\pi^2) \approx 0.7905$, versus an expected distance of 2 for the uniform distribution.  The corresponding values for genus 3 are $\approx 0.7985$ and 3.

Given the value of $a_1$ we can take this approach further, computing the median and expected distance for $a_2$ conditioned on $a_1$.  Applying (\ref{EigenvalueDistribution}), we precompute a table of median and expected distance values for $a_2$ for various ranges of $a_1$.  In genus 3, we find that the largest expected distance for $a_2$ given $a_1$ is about $0.66$, much smaller than the value 7.5 for a uniform distribution of $a_2$ over the interval given by (\ref{equation:abounds}).

Of course such optimizations are effective only when the polynomials $L_p(T)$ for a particular curve and relatively small values of $p$ actually correspond to (apparently) random unitary symplectic polynomials.  For $g>1$, it is not known whether this occurs at all, even as $p\to\infty$.\footnote{Results are known for certain universal families of curves, e.g. \cite[Thm. 10.8.2]{Katz:RandomMatrices}.}  In genus 1, while the Sato-Tate conjecture is now largely proven over $\mathbb{Q}$
\cite{Harris:SatoTateProof}, the convergence rate remains the subject of conjecture.  Indeed, the investigation of such questions was one motivation for undertaking these computations.  It is only natural to ask whether our assumptions are met.

\vspace{12pt}
% surely this could be put in a separate file and included without starting a new page, but I don't know how to do it...
\begin{picture}(2200,600)
\color[rgb]{0.0,0.2,0.8}
% these points are all shifted right by 24 to offset some wierdness in the picture environment
\drawline(492,0)(492,2)(493,2)(493,0)
\drawline(493,0)(493,2)(494,2)(494,0)
\drawline(494,0)(494,7)(495,7)(495,0)
\drawline(495,0)(495,13)(496,13)(496,0)
\drawline(496,0)(496,17)(497,17)(497,0)
\drawline(497,0)(497,28)(498,28)(498,0)
\drawline(498,0)(498,40)(499,40)(499,0)
\drawline(499,0)(499,50)(500,50)(500,0)
\drawline(500,0)(500,58)(501,58)(501,0)
\drawline(501,0)(501,61)(502,61)(502,0)
\drawline(502,0)(502,74)(503,74)(503,0)
\drawline(503,0)(503,86)(504,86)(504,0)
\drawline(504,0)(504,100)(505,100)(505,0)
\drawline(505,0)(505,114)(506,114)(506,0)
\drawline(506,0)(506,138)(507,138)(507,0)
\drawline(507,0)(507,147)(508,147)(508,0)
\drawline(508,0)(508,158)(509,158)(509,0)
\drawline(509,0)(509,165)(510,165)(510,0)
\drawline(510,0)(510,191)(511,191)(511,0)
\drawline(511,0)(511,205)(512,205)(512,0)
\drawline(512,0)(512,195)(513,195)(513,0)
\drawline(513,0)(513,228)(514,228)(514,0)
\drawline(514,0)(514,244)(515,244)(515,0)
\drawline(515,0)(515,248)(516,248)(516,0)
\drawline(516,0)(516,259)(517,259)(517,0)
\drawline(517,0)(517,294)(518,294)(518,0)
\drawline(518,0)(518,274)(519,274)(519,0)
\drawline(519,0)(519,301)(520,301)(520,0)
\drawline(520,0)(520,301)(521,301)(521,0)
\drawline(521,0)(521,338)(522,338)(522,0)
\drawline(522,0)(522,311)(523,311)(523,0)
\drawline(523,0)(523,324)(524,324)(524,0)
\drawline(524,0)(524,368)(525,368)(525,0)
\drawline(525,0)(525,356)(526,356)(526,0)
\drawline(526,0)(526,366)(527,366)(527,0)
\drawline(527,0)(527,369)(528,369)(528,0)
\drawline(528,0)(528,381)(529,381)(529,0)
\drawline(529,0)(529,409)(530,409)(530,0)
\drawline(530,0)(530,402)(531,402)(531,0)
\drawline(531,0)(531,435)(532,435)(532,0)
\drawline(532,0)(532,397)(533,397)(533,0)
\drawline(533,0)(533,419)(534,419)(534,0)
\drawline(534,0)(534,439)(535,439)(535,0)
\drawline(535,0)(535,440)(536,440)(536,0)
\drawline(536,0)(536,424)(537,424)(537,0)
\drawline(537,0)(537,456)(538,456)(538,0)
\drawline(538,0)(538,449)(539,449)(539,0)
\drawline(539,0)(539,469)(540,469)(540,0)
\drawline(540,0)(540,454)(541,454)(541,0)
\drawline(541,0)(541,462)(542,462)(542,0)
\drawline(542,0)(542,478)(543,478)(543,0)
\drawline(543,0)(543,451)(544,451)(544,0)
\drawline(544,0)(544,459)(545,459)(545,0)
\drawline(545,0)(545,434)(546,434)(546,0)
\drawline(546,0)(546,468)(547,468)(547,0)
\drawline(547,0)(547,462)(548,462)(548,0)
\drawline(548,0)(548,461)(549,461)(549,0)
\drawline(549,0)(549,472)(550,472)(550,0)
\drawline(550,0)(550,467)(551,467)(551,0)
\drawline(551,0)(551,453)(552,453)(552,0)
\drawline(552,0)(552,443)(553,443)(553,0)
\drawline(553,0)(553,432)(554,432)(554,0)
\drawline(554,0)(554,449)(555,449)(555,0)
\drawline(555,0)(555,438)(556,438)(556,0)
\drawline(556,0)(556,449)(557,449)(557,0)
\drawline(557,0)(557,447)(558,447)(558,0)
\drawline(558,0)(558,433)(559,433)(559,0)
\drawline(559,0)(559,433)(560,433)(560,0)
\drawline(560,0)(560,425)(561,425)(561,0)
\drawline(561,0)(561,415)(562,415)(562,0)
\drawline(562,0)(562,417)(563,417)(563,0)
\drawline(563,0)(563,405)(564,405)(564,0)
\drawline(564,0)(564,377)(565,377)(565,0)
\drawline(565,0)(565,396)(566,396)(566,0)
\drawline(566,0)(566,395)(567,395)(567,0)
\drawline(567,0)(567,394)(568,394)(568,0)
\drawline(568,0)(568,381)(569,381)(569,0)
\drawline(569,0)(569,381)(570,381)(570,0)
\drawline(570,0)(570,381)(571,381)(571,0)
\drawline(571,0)(571,388)(572,388)(572,0)
\drawline(572,0)(572,338)(573,338)(573,0)
\drawline(573,0)(573,343)(574,343)(574,0)
\drawline(574,0)(574,328)(575,328)(575,0)
\drawline(575,0)(575,317)(576,317)(576,0)
\drawline(576,0)(576,306)(577,306)(577,0)
\drawline(577,0)(577,325)(578,325)(578,0)
\drawline(578,0)(578,298)(579,298)(579,0)
\drawline(579,0)(579,293)(580,293)(580,0)
\drawline(580,0)(580,291)(581,291)(581,0)
\drawline(581,0)(581,280)(582,280)(582,0)
\drawline(582,0)(582,274)(583,274)(583,0)
\drawline(583,0)(583,263)(584,263)(584,0)
\drawline(584,0)(584,279)(585,279)(585,0)
\drawline(585,0)(585,240)(586,240)(586,0)
\drawline(586,0)(586,234)(587,234)(587,0)
\drawline(587,0)(587,223)(588,223)(588,0)
\drawline(588,0)(588,234)(589,234)(589,0)
\drawline(589,0)(589,220)(590,220)(590,0)
\drawline(590,0)(590,231)(591,231)(591,0)
\drawline(591,0)(591,198)(592,198)(592,0)
\drawline(592,0)(592,190)(593,190)(593,0)
\drawline(593,0)(593,198)(594,198)(594,0)
\drawline(594,0)(594,190)(595,190)(595,0)
\drawline(595,0)(595,183)(596,183)(596,0)
\drawline(596,0)(596,193)(597,193)(597,0)
\drawline(597,0)(597,183)(598,183)(598,0)
\drawline(598,0)(598,158)(599,158)(599,0)
\drawline(599,0)(599,160)(600,160)(600,0)
\drawline(600,0)(600,149)(601,149)(601,0)
\drawline(601,0)(601,140)(602,140)(602,0)
\drawline(602,0)(602,131)(603,131)(603,0)
\drawline(603,0)(603,127)(604,127)(604,0)
\drawline(604,0)(604,128)(605,128)(605,0)
\drawline(605,0)(605,113)(606,113)(606,0)
\drawline(606,0)(606,105)(607,105)(607,0)
\drawline(607,0)(607,106)(608,106)(608,0)
\drawline(608,0)(608,105)(609,105)(609,0)
\drawline(609,0)(609,106)(610,106)(610,0)
\drawline(610,0)(610,102)(611,102)(611,0)
\drawline(611,0)(611,98)(612,98)(612,0)
\drawline(612,0)(612,83)(613,83)(613,0)
\drawline(613,0)(613,88)(614,88)(614,0)
\drawline(614,0)(614,78)(615,78)(615,0)
\drawline(615,0)(615,79)(616,79)(616,0)
\drawline(616,0)(616,77)(617,77)(617,0)
\drawline(617,0)(617,72)(618,72)(618,0)
\drawline(618,0)(618,70)(619,70)(619,0)
\drawline(619,0)(619,63)(620,63)(620,0)
\drawline(620,0)(620,58)(621,58)(621,0)
\drawline(621,0)(621,56)(622,56)(622,0)
\drawline(622,0)(622,65)(623,65)(623,0)
\drawline(623,0)(623,57)(624,57)(624,0)
\drawline(624,0)(624,54)(625,54)(625,0)
\drawline(625,0)(625,45)(626,45)(626,0)
\drawline(626,0)(626,41)(627,41)(627,0)
\drawline(627,0)(627,46)(628,46)(628,0)
\drawline(628,0)(628,40)(629,40)(629,0)
\drawline(629,0)(629,47)(630,47)(630,0)
\drawline(630,0)(630,43)(631,43)(631,0)
\drawline(631,0)(631,38)(632,38)(632,0)
\drawline(632,0)(632,33)(633,33)(633,0)
\drawline(633,0)(633,38)(634,38)(634,0)
\drawline(634,0)(634,37)(635,37)(635,0)
\drawline(635,0)(635,32)(636,32)(636,0)
\drawline(636,0)(636,32)(637,32)(637,0)
\drawline(637,0)(637,37)(638,37)(638,0)
\drawline(638,0)(638,28)(639,28)(639,0)
\drawline(639,0)(639,29)(640,29)(640,0)
\drawline(640,0)(640,30)(641,30)(641,0)
\drawline(641,0)(641,28)(642,28)(642,0)
\drawline(642,0)(642,20)(643,20)(643,0)
\drawline(643,0)(643,23)(644,23)(644,0)
\drawline(644,0)(644,21)(645,21)(645,0)
\drawline(645,0)(645,27)(646,27)(646,0)
\drawline(646,0)(646,22)(647,22)(647,0)
\drawline(647,0)(647,24)(648,24)(648,0)
\drawline(648,0)(648,19)(649,19)(649,0)
\drawline(649,0)(649,23)(650,23)(650,0)
\drawline(650,0)(650,22)(651,22)(651,0)
\drawline(651,0)(651,16)(652,16)(652,0)
\drawline(652,0)(652,20)(653,20)(653,0)
\drawline(653,0)(653,25)(654,25)(654,0)
\drawline(654,0)(654,20)(655,20)(655,0)
\drawline(655,0)(655,20)(656,20)(656,0)
\drawline(656,0)(656,13)(657,13)(657,0)
\drawline(657,0)(657,15)(658,15)(658,0)
\drawline(658,0)(658,19)(659,19)(659,0)
\drawline(659,0)(659,10)(660,10)(660,0)
\drawline(660,0)(660,12)(661,12)(661,0)
\drawline(661,0)(661,12)(662,12)(662,0)
\drawline(662,0)(662,14)(663,14)(663,0)
\drawline(663,0)(663,10)(664,10)(664,0)
\drawline(664,0)(664,15)(665,15)(665,0)
\drawline(665,0)(665,9)(666,9)(666,0)
\drawline(666,0)(666,13)(667,13)(667,0)
\drawline(667,0)(667,13)(668,13)(668,0)
\drawline(668,0)(668,13)(669,13)(669,0)
\drawline(669,0)(669,10)(670,10)(670,0)
\drawline(670,0)(670,7)(671,7)(671,0)
\drawline(671,0)(671,7)(672,7)(672,0)
\drawline(672,0)(672,10)(673,10)(673,0)
\drawline(673,0)(673,7)(674,7)(674,0)
\drawline(674,0)(674,10)(675,10)(675,0)
\drawline(675,0)(675,7)(676,7)(676,0)
\drawline(676,0)(676,7)(677,7)(677,0)
\drawline(677,0)(677,10)(678,10)(678,0)
\drawline(678,0)(678,6)(679,6)(679,0)
\drawline(679,0)(679,8)(680,8)(680,0)
\drawline(680,0)(680,8)(681,8)(681,0)
\drawline(681,0)(681,5)(682,5)(682,0)
\drawline(682,0)(682,7)(683,7)(683,0)
\drawline(683,0)(683,2)(684,2)(684,0)
\drawline(684,0)(684,5)(685,5)(685,0)
\drawline(685,0)(685,4)(686,4)(686,0)
\drawline(686,0)(686,4)(687,4)(687,0)
\drawline(687,0)(687,7)(688,7)(688,0)
\drawline(688,0)(688,3)(689,3)(689,0)
\drawline(689,0)(689,5)(690,5)(690,0)
\drawline(690,0)(690,6)(691,6)(691,0)
\drawline(691,0)(691,4)(692,4)(692,0)
\drawline(692,0)(692,8)(693,8)(693,0)
\drawline(693,0)(693,7)(694,7)(694,0)
\drawline(694,0)(694,6)(695,6)(695,0)
\drawline(695,0)(695,5)(696,5)(696,0)
\drawline(696,0)(696,3)(697,3)(697,0)
\drawline(697,0)(697,2)(698,2)(698,0)
\drawline(698,0)(698,4)(699,4)(699,0)
\drawline(699,0)(699,5)(700,5)(700,0)
\drawline(700,0)(700,5)(701,5)(701,0)
\drawline(701,0)(701,3)(702,3)(702,0)
\drawline(702,0)(702,3)(703,3)(703,0)
\drawline(703,0)(703,3)(704,3)(704,0)
\drawline(704,0)(704,3)(705,3)(705,0)
\drawline(705,0)(705,4)(706,4)(706,0)
\drawline(706,0)(706,3)(707,3)(707,0)
\drawline(707,0)(707,4)(708,4)(708,0)
\drawline(708,0)(708,2)(709,2)(709,0)
\drawline(709,0)(709,3)(710,3)(710,0)
\drawline(710,0)(710,2)(711,2)(711,0)
\drawline(711,0)(711,3)(712,3)(712,0)
\drawline(712,0)(712,1)(713,1)(713,0)
\drawline(713,0)(713,4)(714,4)(714,0)
\drawline(714,0)(714,4)(715,4)(715,0)
\drawline(715,0)(715,2)(716,2)(716,0)
\drawline(716,0)(716,2)(717,2)(717,0)
\drawline(717,0)(717,1)(718,1)(718,0)
\drawline(719,0)(719,1)(720,1)(720,0)
\drawline(720,0)(720,1)(721,1)(721,0)
\drawline(721,0)(721,3)(722,3)(722,0)
\drawline(722,0)(722,2)(723,2)(723,0)
\drawline(726,0)(726,2)(727,2)(727,0)
\drawline(727,0)(727,1)(728,1)(728,0)
\drawline(728,0)(728,1)(729,1)(729,0)
\drawline(729,0)(729,2)(730,2)(730,0)
\drawline(731,0)(731,2)(732,2)(732,0)
\drawline(732,0)(732,2)(733,2)(733,0)
\drawline(734,0)(734,1)(735,1)(735,0)
\drawline(735,0)(735,1)(736,1)(736,0)
\drawline(736,0)(736,1)(737,1)(737,0)
\drawline(738,0)(738,1)(739,1)(739,0)
\drawline(739,0)(739,1)(740,1)(740,0)
\drawline(740,0)(740,1)(741,1)(741,0)
\drawline(746,0)(746,1)(747,1)(747,0)
\drawline(747,0)(747,1)(748,1)(748,0)
\drawline(750,0)(750,1)(751,1)(751,0)
\drawline(756,0)(756,1)(757,1)(757,0)
\drawline(759,0)(759,1)(760,1)(760,0)
\drawline(764,0)(764,1)(765,1)(765,0)
\drawline(774,0)(774,1)(775,1)(775,0)

% these points are all shifted right by 24 to offset some wierdness in the picture environment
\drawline(1691,0)(1691,1)(1692,1)(1692,0)
\drawline(1692,0)(1692,3)(1693,3)(1693,0)
\drawline(1693,0)(1693,7)(1694,7)(1694,0)
\drawline(1694,0)(1694,13)(1695,13)(1695,0)
\drawline(1695,0)(1695,20)(1696,20)(1696,0)
\drawline(1696,0)(1696,27)(1697,27)(1697,0)
\drawline(1697,0)(1697,36)(1698,36)(1698,0)
\drawline(1698,0)(1698,46)(1699,46)(1699,0)
\drawline(1699,0)(1699,56)(1700,56)(1700,0)
\drawline(1700,0)(1700,67)(1701,67)(1701,0)
\drawline(1701,0)(1701,79)(1702,79)(1702,0)
\drawline(1702,0)(1702,91)(1703,91)(1703,0)
\drawline(1703,0)(1703,103)(1704,103)(1704,0)
\drawline(1704,0)(1704,116)(1705,116)(1705,0)
\drawline(1705,0)(1705,129)(1706,129)(1706,0)
\drawline(1706,0)(1706,142)(1707,142)(1707,0)
\drawline(1707,0)(1707,156)(1708,156)(1708,0)
\drawline(1708,0)(1708,169)(1709,169)(1709,0)
\drawline(1709,0)(1709,183)(1710,183)(1710,0)
\drawline(1710,0)(1710,196)(1711,196)(1711,0)
\drawline(1711,0)(1711,209)(1712,209)(1712,0)
\drawline(1712,0)(1712,223)(1713,223)(1713,0)
\drawline(1713,0)(1713,236)(1714,236)(1714,0)
\drawline(1714,0)(1714,249)(1715,249)(1715,0)
\drawline(1715,0)(1715,261)(1716,261)(1716,0)
\drawline(1716,0)(1716,274)(1717,274)(1717,0)
\drawline(1717,0)(1717,286)(1718,286)(1718,0)
\drawline(1718,0)(1718,298)(1719,298)(1719,0)
\drawline(1719,0)(1719,309)(1720,309)(1720,0)
\drawline(1720,0)(1720,321)(1721,321)(1721,0)
\drawline(1721,0)(1721,332)(1722,332)(1722,0)
\drawline(1722,0)(1722,342)(1723,342)(1723,0)
\drawline(1723,0)(1723,352)(1724,352)(1724,0)
\drawline(1724,0)(1724,362)(1725,362)(1725,0)
\drawline(1725,0)(1725,371)(1726,371)(1726,0)
\drawline(1726,0)(1726,380)(1727,380)(1727,0)
\drawline(1727,0)(1727,388)(1728,388)(1728,0)
\drawline(1728,0)(1728,396)(1729,396)(1729,0)
\drawline(1729,0)(1729,404)(1730,404)(1730,0)
\drawline(1730,0)(1730,411)(1731,411)(1731,0)
\drawline(1731,0)(1731,418)(1732,418)(1732,0)
\drawline(1732,0)(1732,424)(1733,424)(1733,0)
\drawline(1733,0)(1733,429)(1734,429)(1734,0)
\drawline(1734,0)(1734,435)(1735,435)(1735,0)
\drawline(1735,0)(1735,439)(1736,439)(1736,0)
\drawline(1736,0)(1736,444)(1737,444)(1737,0)
\drawline(1737,0)(1737,447)(1738,447)(1738,0)
\drawline(1738,0)(1738,451)(1739,451)(1739,0)
\drawline(1739,0)(1739,454)(1740,454)(1740,0)
\drawline(1740,0)(1740,456)(1741,456)(1741,0)
\drawline(1741,0)(1741,458)(1742,458)(1742,0)
\drawline(1742,0)(1742,459)(1743,459)(1743,0)
\drawline(1743,0)(1743,461)(1744,461)(1744,0)
\drawline(1744,0)(1744,461)(1745,461)(1745,0)
\drawline(1745,0)(1745,461)(1746,461)(1746,0)
\drawline(1746,0)(1746,461)(1747,461)(1747,0)
\drawline(1747,0)(1747,461)(1748,461)(1748,0)
\drawline(1748,0)(1748,460)(1749,460)(1749,0)
\drawline(1749,0)(1749,458)(1750,458)(1750,0)
\drawline(1750,0)(1750,456)(1751,456)(1751,0)
\drawline(1751,0)(1751,454)(1752,454)(1752,0)
\drawline(1752,0)(1752,452)(1753,452)(1753,0)
\drawline(1753,0)(1753,449)(1754,449)(1754,0)
\drawline(1754,0)(1754,446)(1755,446)(1755,0)
\drawline(1755,0)(1755,442)(1756,442)(1756,0)
\drawline(1756,0)(1756,438)(1757,438)(1757,0)
\drawline(1757,0)(1757,434)(1758,434)(1758,0)
\drawline(1758,0)(1758,429)(1759,429)(1759,0)
\drawline(1759,0)(1759,425)(1760,425)(1760,0)
\drawline(1760,0)(1760,420)(1761,420)(1761,0)
\drawline(1761,0)(1761,414)(1762,414)(1762,0)
\drawline(1762,0)(1762,409)(1763,409)(1763,0)
\drawline(1763,0)(1763,403)(1764,403)(1764,0)
\drawline(1764,0)(1764,397)(1765,397)(1765,0)
\drawline(1765,0)(1765,391)(1766,391)(1766,0)
\drawline(1766,0)(1766,385)(1767,385)(1767,0)
\drawline(1767,0)(1767,378)(1768,378)(1768,0)
\drawline(1768,0)(1768,372)(1769,372)(1769,0)
\drawline(1769,0)(1769,365)(1770,365)(1770,0)
\drawline(1770,0)(1770,358)(1771,358)(1771,0)
\drawline(1771,0)(1771,351)(1772,351)(1772,0)
\drawline(1772,0)(1772,343)(1773,343)(1773,0)
\drawline(1773,0)(1773,336)(1774,336)(1774,0)
\drawline(1774,0)(1774,329)(1775,329)(1775,0)
\drawline(1775,0)(1775,321)(1776,321)(1776,0)
\drawline(1776,0)(1776,314)(1777,314)(1777,0)
\drawline(1777,0)(1777,306)(1778,306)(1778,0)
\drawline(1778,0)(1778,298)(1779,298)(1779,0)
\drawline(1779,0)(1779,291)(1780,291)(1780,0)
\drawline(1780,0)(1780,283)(1781,283)(1781,0)
\drawline(1781,0)(1781,275)(1782,275)(1782,0)
\drawline(1782,0)(1782,268)(1783,268)(1783,0)
\drawline(1783,0)(1783,260)(1784,260)(1784,0)
\drawline(1784,0)(1784,252)(1785,252)(1785,0)
\drawline(1785,0)(1785,245)(1786,245)(1786,0)
\drawline(1786,0)(1786,237)(1787,237)(1787,0)
\drawline(1787,0)(1787,230)(1788,230)(1788,0)
\drawline(1788,0)(1788,222)(1789,222)(1789,0)
\drawline(1789,0)(1789,215)(1790,215)(1790,0)
\drawline(1790,0)(1790,208)(1791,208)(1791,0)
\drawline(1791,0)(1791,201)(1792,201)(1792,0)
\drawline(1792,0)(1792,194)(1793,194)(1793,0)
\drawline(1793,0)(1793,187)(1794,187)(1794,0)
\drawline(1794,0)(1794,180)(1795,180)(1795,0)
\drawline(1795,0)(1795,173)(1796,173)(1796,0)
\drawline(1796,0)(1796,167)(1797,167)(1797,0)
\drawline(1797,0)(1797,160)(1798,160)(1798,0)
\drawline(1798,0)(1798,154)(1799,154)(1799,0)
\drawline(1799,0)(1799,148)(1800,148)(1800,0)
\drawline(1800,0)(1800,142)(1801,142)(1801,0)
\drawline(1801,0)(1801,137)(1802,137)(1802,0)
\drawline(1802,0)(1802,131)(1803,131)(1803,0)
\drawline(1803,0)(1803,125)(1804,125)(1804,0)
\drawline(1804,0)(1804,120)(1805,120)(1805,0)
\drawline(1805,0)(1805,115)(1806,115)(1806,0)
\drawline(1806,0)(1806,110)(1807,110)(1807,0)
\drawline(1807,0)(1807,106)(1808,106)(1808,0)
\drawline(1808,0)(1808,101)(1809,101)(1809,0)
\drawline(1809,0)(1809,97)(1810,97)(1810,0)
\drawline(1810,0)(1810,92)(1811,92)(1811,0)
\drawline(1811,0)(1811,88)(1812,88)(1812,0)
\drawline(1812,0)(1812,85)(1813,85)(1813,0)
\drawline(1813,0)(1813,81)(1814,81)(1814,0)
\drawline(1814,0)(1814,78)(1815,78)(1815,0)
\drawline(1815,0)(1815,74)(1816,74)(1816,0)
\drawline(1816,0)(1816,71)(1817,71)(1817,0)
\drawline(1817,0)(1817,68)(1818,68)(1818,0)
\drawline(1818,0)(1818,65)(1819,65)(1819,0)
\drawline(1819,0)(1819,63)(1820,63)(1820,0)
\drawline(1820,0)(1820,60)(1821,60)(1821,0)
\drawline(1821,0)(1821,58)(1822,58)(1822,0)
\drawline(1822,0)(1822,55)(1823,55)(1823,0)
\drawline(1823,0)(1823,53)(1824,53)(1824,0)
\drawline(1824,0)(1824,51)(1825,51)(1825,0)
\drawline(1825,0)(1825,49)(1826,49)(1826,0)
\drawline(1826,0)(1826,47)(1827,47)(1827,0)
\drawline(1827,0)(1827,45)(1828,45)(1828,0)
\drawline(1828,0)(1828,44)(1829,44)(1829,0)
\drawline(1829,0)(1829,42)(1830,42)(1830,0)
\drawline(1830,0)(1830,40)(1831,40)(1831,0)
\drawline(1831,0)(1831,39)(1832,39)(1832,0)
\drawline(1832,0)(1832,37)(1833,37)(1833,0)
\drawline(1833,0)(1833,36)(1834,36)(1834,0)
\drawline(1834,0)(1834,35)(1835,35)(1835,0)
\drawline(1835,0)(1835,33)(1836,33)(1836,0)
\drawline(1836,0)(1836,32)(1837,32)(1837,0)
\drawline(1837,0)(1837,31)(1838,31)(1838,0)
\drawline(1838,0)(1838,30)(1839,30)(1839,0)
\drawline(1839,0)(1839,29)(1840,29)(1840,0)
\drawline(1840,0)(1840,28)(1841,28)(1841,0)
\drawline(1841,0)(1841,27)(1842,27)(1842,0)
\drawline(1842,0)(1842,26)(1843,26)(1843,0)
\drawline(1843,0)(1843,25)(1844,25)(1844,0)
\drawline(1844,0)(1844,24)(1845,24)(1845,0)
\drawline(1845,0)(1845,23)(1846,23)(1846,0)
\drawline(1846,0)(1846,22)(1847,22)(1847,0)
\drawline(1847,0)(1847,21)(1848,21)(1848,0)
\drawline(1848,0)(1848,21)(1849,21)(1849,0)
\drawline(1849,0)(1849,20)(1850,20)(1850,0)
\drawline(1850,0)(1850,19)(1851,19)(1851,0)
\drawline(1851,0)(1851,18)(1852,18)(1852,0)
\drawline(1852,0)(1852,18)(1853,18)(1853,0)
\drawline(1853,0)(1853,17)(1854,17)(1854,0)
\drawline(1854,0)(1854,17)(1855,17)(1855,0)
\drawline(1855,0)(1855,16)(1856,16)(1856,0)
\drawline(1856,0)(1856,15)(1857,15)(1857,0)
\drawline(1857,0)(1857,15)(1858,15)(1858,0)
\drawline(1858,0)(1858,14)(1859,14)(1859,0)
\drawline(1859,0)(1859,14)(1860,14)(1860,0)
\drawline(1860,0)(1860,13)(1861,13)(1861,0)
\drawline(1861,0)(1861,13)(1862,13)(1862,0)
\drawline(1862,0)(1862,12)(1863,12)(1863,0)
\drawline(1863,0)(1863,12)(1864,12)(1864,0)
\drawline(1864,0)(1864,11)(1865,11)(1865,0)
\drawline(1865,0)(1865,11)(1866,11)(1866,0)
\drawline(1866,0)(1866,11)(1867,11)(1867,0)
\drawline(1867,0)(1867,10)(1868,10)(1868,0)
\drawline(1868,0)(1868,10)(1869,10)(1869,0)
\drawline(1869,0)(1869,10)(1870,10)(1870,0)
\drawline(1870,0)(1870,9)(1871,9)(1871,0)
\drawline(1871,0)(1871,9)(1872,9)(1872,0)
\drawline(1872,0)(1872,9)(1873,9)(1873,0)
\drawline(1873,0)(1873,8)(1874,8)(1874,0)
\drawline(1874,0)(1874,8)(1875,8)(1875,0)
\drawline(1875,0)(1875,8)(1876,8)(1876,0)
\drawline(1876,0)(1876,7)(1877,7)(1877,0)
\drawline(1877,0)(1877,7)(1878,7)(1878,0)
\drawline(1878,0)(1878,7)(1879,7)(1879,0)
\drawline(1879,0)(1879,7)(1880,7)(1880,0)
\drawline(1880,0)(1880,6)(1881,6)(1881,0)
\drawline(1881,0)(1881,6)(1882,6)(1882,0)
\drawline(1882,0)(1882,6)(1883,6)(1883,0)
\drawline(1883,0)(1883,6)(1884,6)(1884,0)
\drawline(1884,0)(1884,5)(1885,5)(1885,0)
\drawline(1885,0)(1885,5)(1886,5)(1886,0)
\drawline(1886,0)(1886,5)(1887,5)(1887,0)
\drawline(1887,0)(1887,5)(1888,5)(1888,0)
\drawline(1888,0)(1888,5)(1889,5)(1889,0)
\drawline(1889,0)(1889,5)(1890,5)(1890,0)
\drawline(1890,0)(1890,4)(1891,4)(1891,0)
\drawline(1891,0)(1891,4)(1892,4)(1892,0)
\drawline(1892,0)(1892,4)(1893,4)(1893,0)
\drawline(1893,0)(1893,4)(1894,4)(1894,0)
\drawline(1894,0)(1894,4)(1895,4)(1895,0)
\drawline(1895,0)(1895,4)(1896,4)(1896,0)
\drawline(1896,0)(1896,4)(1897,4)(1897,0)
\drawline(1897,0)(1897,3)(1898,3)(1898,0)
\drawline(1898,0)(1898,3)(1899,3)(1899,0)
\drawline(1899,0)(1899,3)(1900,3)(1900,0)
\drawline(1900,0)(1900,3)(1901,3)(1901,0)
\drawline(1901,0)(1901,3)(1902,3)(1902,0)
\drawline(1902,0)(1902,3)(1903,3)(1903,0)
\drawline(1903,0)(1903,3)(1904,3)(1904,0)
\drawline(1904,0)(1904,3)(1905,3)(1905,0)
\drawline(1905,0)(1905,3)(1906,3)(1906,0)
\drawline(1906,0)(1906,2)(1907,2)(1907,0)
\drawline(1907,0)(1907,2)(1908,2)(1908,0)
\drawline(1908,0)(1908,2)(1909,2)(1909,0)
\drawline(1909,0)(1909,2)(1910,2)(1910,0)
\drawline(1910,0)(1910,2)(1911,2)(1911,0)
\drawline(1911,0)(1911,2)(1912,2)(1912,0)
\drawline(1912,0)(1912,2)(1913,2)(1913,0)
\drawline(1913,0)(1913,2)(1914,2)(1914,0)
\drawline(1914,0)(1914,2)(1915,2)(1915,0)
\drawline(1915,0)(1915,2)(1916,2)(1916,0)
\drawline(1916,0)(1916,2)(1917,2)(1917,0)
\drawline(1917,0)(1917,2)(1918,2)(1918,0)
\drawline(1918,0)(1918,2)(1919,2)(1919,0)
\drawline(1919,0)(1919,1)(1920,1)(1920,0)
\drawline(1920,0)(1920,1)(1921,1)(1921,0)
\drawline(1921,0)(1921,1)(1922,1)(1922,0)
\drawline(1922,0)(1922,1)(1923,1)(1923,0)
\drawline(1923,0)(1923,1)(1924,1)(1924,0)
\drawline(1924,0)(1924,1)(1925,1)(1925,0)
\drawline(1925,0)(1925,1)(1926,1)(1926,0)
\drawline(1926,0)(1926,1)(1927,1)(1927,0)
\drawline(1927,0)(1927,1)(1928,1)(1928,0)
\drawline(1928,0)(1928,1)(1929,1)(1929,0)
\drawline(1929,0)(1929,1)(1930,1)(1930,0)
\drawline(1930,0)(1930,1)(1931,1)(1931,0)
\drawline(1931,0)(1931,1)(1932,1)(1932,0)
\drawline(1932,0)(1932,1)(1933,1)(1933,0)
\drawline(1933,0)(1933,1)(1934,1)(1934,0)
\drawline(1934,0)(1934,1)(1935,1)(1935,0)
\drawline(1935,0)(1935,1)(1936,1)(1936,0)
\drawline(1936,0)(1936,1)(1937,1)(1937,0)
\drawline(1937,0)(1937,1)(1938,1)(1938,0)
\drawline(1938,0)(1938,1)(1939,1)(1939,0)
\drawline(1939,0)(1939,1)(1940,1)(1940,0)
\drawline(1940,0)(1940,1)(1941,1)(1941,0)
\drawline(1941,0)(1941,1)(1942,1)(1942,0)
\drawline(1942,0)(1942,1)(1943,1)(1943,0)
\drawline(1943,0)(1943,1)(1944,1)(1944,0)
\drawline(1944,0)(1944,1)(1945,1)(1945,0)
\drawline(1945,0)(1945,1)(1946,1)(1946,0)

\color{black}
\drawline(0,0)(0,600)(1000,600)(1000,0)(0,0)
\drawline(34,0)(34,8)
\drawline(68,0)(68,8)
\drawline(101,0)(101,8)
\drawline(134,0)(134,8)
\drawline(168,0)(168,8)
\drawline(201,0)(201,8)
\drawline(234,0)(234,8)
\drawline(268,0)(268,8)
\drawline(301,0)(301,8)
\drawline(334,0)(334,8)
\drawline(368,0)(368,8)
\drawline(401,0)(401,8)
\drawline(434,0)(434,8)
\drawline(468,0)(468,8)
\drawline(501,0)(501,12)
\drawline(534,0)(534,8)
\drawline(568,0)(568,8)
\drawline(601,0)(601,8)
\drawline(634,0)(634,8)
\drawline(668,0)(668,8)
\drawline(701,0)(701,8)
\drawline(734,0)(734,8)
\drawline(768,0)(768,8)
\drawline(801,0)(801,8)
\drawline(834,0)(834,8)
\drawline(868,0)(868,8)
\drawline(901,0)(901,8)
\drawline(934,0)(934,8)
\drawline(968,0)(968,8)
\drawline(0,36)(8,36)
\drawline(0,72)(8,72)
\drawline(0,108)(8,108)
\drawline(0,144)(8,144)
\drawline(0,180)(8,180)
\drawline(0,216)(8,216)
\drawline(0,252)(8,252)
\drawline(0,288)(8,288)
\drawline(0,324)(8,324)
\drawline(0,360)(8,360)
\drawline(0,396)(8,396)
\drawline(0,432)(8,432)
\drawline(0,468)(8,468)
\drawline(0,504)(8,504)
\drawline(0,540)(8,540)
\dottedline{12}(0,36)(999,36)
\color{black}
\drawline(1200,0)(1200,600)(2200,600)(2200,0)(1200,0)
\drawline(1234,0)(1234,8)
\drawline(1268,0)(1268,8)
\drawline(1301,0)(1301,8)
\drawline(1334,0)(1334,8)
\drawline(1368,0)(1368,8)
\drawline(1401,0)(1401,8)
\drawline(1434,0)(1434,8)
\drawline(1468,0)(1468,8)
\drawline(1501,0)(1501,8)
\drawline(1534,0)(1534,8)
\drawline(1568,0)(1568,8)
\drawline(1601,0)(1601,8)
\drawline(1634,0)(1634,8)
\drawline(1668,0)(1668,8)
\drawline(1701,0)(1701,12)
\drawline(1734,0)(1734,8)
\drawline(1768,0)(1768,8)
\drawline(1801,0)(1801,8)
\drawline(1834,0)(1834,8)
\drawline(1868,0)(1868,8)
\drawline(1901,0)(1901,8)
\drawline(1934,0)(1934,8)
\drawline(1968,0)(1968,8)
\drawline(2001,0)(2001,8)
\drawline(2034,0)(2034,8)
\drawline(2068,0)(2068,8)
\drawline(2101,0)(2101,8)
\drawline(2134,0)(2134,8)
\drawline(2168,0)(2168,8)
\drawline(1200,36)(1208,36)
\drawline(1200,72)(1208,72)
\drawline(1200,108)(1208,108)
\drawline(1200,144)(1208,144)
\drawline(1200,180)(1208,180)
\drawline(1200,216)(1208,216)
\drawline(1200,252)(1208,252)
\drawline(1200,288)(1208,288)
\drawline(1200,324)(1208,324)
\drawline(1200,360)(1208,360)
\drawline(1200,396)(1208,396)
\drawline(1200,432)(1208,432)
\drawline(1200,468)(1208,468)
\drawline(1200,504)(1208,504)
\drawline(1200,540)(1208,540)
\dottedline{12}(1200,36)(2199,36)
\end{picture}
\vspace{-6pt}
\begin{center}
\small
\hspace{6pt}Histogram of actual $a_2$ values\hspace{44pt}Distribution of $a_2$ given by (\ref{EigenvalueDistribution})
\normalsize
\end{center}
The figure on the left is a histogram of $a_2$ coefficient values obtained by computing $L_p(T)$ for $p\le 2^{24}$ for an arbitrarily chosen genus 3 curve (see Table \ref{table:FullTests}).  The figure on the right is the distribution of $a_2$ predicted by the Haar measure on $USp(2g)$, obtained by numerically integrating
\begin{equation}
a_2 = g + \prod_{j<k}4\cos\theta_j\cos\theta_k
\end{equation}
over the distribution in (\ref{EigenvalueDistribution}).  The dotted lines show the height of the uniform distribution.  Similarly matching graphs are found for the other coefficients.

This remarkable degree of convergence is typical for a randomly chosen curve.  We should note, however, that the generalized form of the Sato-Tate conjecture considered here applies only to curves whose Jacobian over $\mathbb{Q}$ has a trivial endomorphism ring (isomorphic to $\mathbb{Z}$), so there are exceptional cases.  In genus 1 these are curves with complex multiplication.  In higher genera, other exceptional cases occur, such as the genus 2 QM-curves considered in \cite{Hashimoto:SatoTateGenus2}.

\section{Results}

To compare different methods for computing $L_p(T)$ and to assess the feasible range of $L$-series computations, we conducted extensive performance tests.  Our test platform consisted of eight networked PCs, each equipped with a 2.5GHz AMD Athlon processor running a 64-bit Linux operating system. The point-counting and generic group algorithms were implemented using the techniques described in this paper, and we incorporated David Harvey's source code for the $p$-adic computations (the algorithm of \cite{Harvey:LargeCharacteristicKedlaya}, including recent improvements described in \cite{Harvey:KroneckerSubstitution}).  All code was compiled with the GNU C/C++ compiler using the options ``-O2 -m64 -mtune=k8" \cite{GNU}.

In genus 1 there are several existing implementations of the computation contemplated here: given an elliptic curve defined over $\mathbb{Q}$, determine the coefficient $a_1$ of $L_p(T) = pT^2 + a_1T + 1$ for all $p\le N$.  
We were able to compare our implementation with two software packages specifically optimized for this purpose: Magma \cite{Magma}, and the PARI library \cite{PARI} as incorporated in SAGE \cite{SAGE}.  The range of $N$ we could use in this comparison was necessarily limited; results for larger $N$ may be found in Table \ref{table:Genus1FullTests}.

\begin{table}
\begin{center}
\begin{tabular}{@{}rrrr@{}}
%\toprule
$N$ &\hspace{18pt} PARI & \hspace{18pt} Magma & \hspace{18pt} smalljac\\
\midrule
$2^{16}$&     0.26 &     0.29 &    0.07\\
$2^{17}$&     0.55 &     0.59 &    0.15\\
$2^{18}$&     1.17 &     1.24 &    0.30\\
$2^{19}$&     2.51 &     2.53 &    0.62\\
$2^{20}$&     5.46 &     5.26 &    1.29\\
$2^{21}$&    11.67 &    11.09 &    2.65\\
$2^{22}$&    25.46 &    23.31 &    5.53\\
$2^{23}$&    55.50 &    49.22 &   11.56\\
$2^{24}$&   123.02 &   104.50 &   24.31\\
$2^{25}$&   266.40 &   222.56 &   51.60\\
$2^{26}$&   598.16 &   476.74 &  110.29\\
$2^{27}$&  1367.46 &  1017.55 &  233.94\\
$2^{28}$&  3152.91 &  2159.87 &  498.46\\
$2^{29}$&  7317.01 &  4646.24 & 1065.28\\
$2^{30}$& 17167.29 & 10141.28 & 2292.74\\
\bottomrule
\vspace{2pt}
\end{tabular}
\caption{$L$-series computations in genus 1 (CPU seconds)}\label{table:Genus1Comparison}
\end{center}
\vspace{-12pt}
\begin{minipage}{1.0\linewidth}
\small
Each row lists CPU times for a single thread of execution to compute the coefficient $a_1$ of $L_p(T)$ for all $p\le N$, using the elliptic curve $y^2 = x^3 + 314159x +271828$.
In SAGE, the function aplist($N$) performs this computation via the PARI function \emph{ellap}(N).  The corresponding function in Magma is \emph{TracesOfFrobenius}($N$).  The column labeled ``smalljac'' list times for our implementation.
\normalsize
\end{minipage}
\end{table}
%\vspace{-12pt}

Before undertaking similar computations in genus 2 and 3, we first determined the appropriate algorithm to use for various ranges of $p$ using Table \ref{table:AlgorithmComparison}.  Each row gives timings for the algorithms considered here, averaged over a small sample of primes of similar size.

\begin{table}
\begin{center}
\begin{tabular}{@{}lcrrrcrrcr@{}}
%\toprule
& &\multicolumn{3}{c}{Genus 2 -- $L_p(T)$}& &\multicolumn{2}{c}{Genus 3 -- $L_p(T)$}& &\multicolumn{1}{c}{Genus 3 -- $a_1$}\\
\cmidrule(r){3-5}\cmidrule(r){7-8}\cmidrule(r){10-10}
\multicolumn{1}{c}{$p\approx 2^k$}&$\qquad$&
\multicolumn{1}{r}{pts/grp} &\multicolumn{1}{r}{$\quad$group}&\multicolumn{1}{r}{$\quad p$-adic} &$\qquad$&
\multicolumn{1}{r}{pts/grp}&\multicolumn{1}{r}{$\quad p$-adic/grp} &$\qquad$&\multicolumn{1}{r}{points}\\
\midrule
$2^{14}$& &  {\bf 0.22} & 0.55 &  4 & & {\bf 10} & 15 & & {\bf 0.12}\\
$2^{15}$& &  {\bf 0.34} & 0.88 &  6 & & {\bf 21} & 23 & & {\bf 0.23}\\
$2^{16}$& &  {\bf 0.56} & 1.33 &  8 & & 43 & {\bf 31} & & {\bf 0.45}\\
$2^{17}$& &  {\bf 0.98} & 2.21 & 11 & & 82 & {\bf 40} & & {\bf 0.89}\\
$2^{18}$& &  {\bf 1.82} & 3.42 & 17 & &  & {\bf 51} && {\bf 1.78}\\
$2^{19}$& &  {\bf 3.44} & 5.87 & 27  & &  & {\bf 67} && {\bf 3.57}\\
$2^{20}$& &  {\bf 7.98} & 10.1 & 40  & &  & {\bf 97} && {\bf 8.48}\\
$2^{21}$& & 18.9 & {\bf 17.9} &  66  & &       & {\bf 148} && {\bf 19.7}\\
$2^{22}$& & 52 & {\bf 35} & 104 & &       & {\bf 212} && {\bf 56}\\
$2^{23}$& &       & {\bf 54}   & 176 & &       & {\bf 355} && {\bf 123}\\
$2^{24}$& &       & {\bf 104}  & 288 & &       & {\bf 577} && 738\\
$2^{25}$& &       & {\bf 173}  & 494 & &       & {\bf 995} && 1870\\
$2^{26}$& &       & {\bf 306}  & 871 & &       & {\bf 1753} && 4550\\
$2^{27}$& &       & {\bf 505}  & 1532 & &       & {\bf 3070}  && 9800\\
\bottomrule
\vspace{2pt}
\end{tabular}
\caption{$L_p(T)$ computations (CPU milliseconds)}\label{table:AlgorithmComparison}
\end{center}
\vspace{-12pt}
\begin{minipage}{1.0\linewidth}
\small
Random curves of the appropriate genus were generated with coefficients uniformly distributed over $[1,2^k)$.  The polynomial $L_p(T)$ was then computed for 100 primes $\approx 2^k$, with the average CPU time listed.  Columns labeled ``pts/grp" compute $a_1$ by point counting over $\Fp$, followed by a group computation to obtain $L_p(T)$.  The column ``p-adic/grp" computes $L_p(T)$ mod $p$, then applies a group computation to get $L_p(T)$.  The rightmost column computes just the coefficient $a_1$, via point counting over $\Fp$.
\normalsize
\end{minipage}
\end{table}
\vspace{-12pt}

The task of computing $L$-series coefficients is well-suited to parallel computation.  We implemented a simple distributed program which partitions the range $[1,N]$ into subintervals $I_1$, $I_2$, \ldots, $I_m$, distributes the task of computing $L_p(T)$ for $p\in I_m$ to $n$ CPUs on a network, then collects and collates the results.  This is useful even on a single computer whose microprocessor may have two or more cores.  On our 8 node test platform we had 16 CPUs available for computation.  Tables \ref{table:Genus1FullTests} and \ref{table:FullTests} lists elapsed times for $L$-series computations in single and 8-node configurations.

For practical reasons, we limited the duration of any single test.  Larger computations could be undertaken with additional time and/or computing resources, without requiring software modifications.  As they stand, the results extend to values of $N$ substantially larger than any we could find in the literature.  

Source code for the software can be freely obtained under a GNU General Public License (GPL) and is expected to be incorporated into SAGE.  It is a pleasure to thank William Stein for access to the SAGE computational resources at the University of Washington, and especially David Harvey for providing the code used for the $p$-adic computations.
\begin{table}
\begin{center}
\begin{tabular}{@{}rcrrcrcrr@{}}
%\toprule
&&\multicolumn{2}{c}{Genus 1}&&&&\multicolumn{2}{c}{Genus 1}\\
\cmidrule(r){3-4}\cmidrule(r){8-9}
$N$&$\quad$&$\qquad\times 1$ & $\quad\qquad\times 8$ &$\qquad\qquad$&$N$& $\quad$& $\qquad\times 1$& $\quad\qquad\times 8$\\
\midrule
$2^{21}$&&  1.5  &  0.5 && $2^{30}$&&    20:43 &     2:41\\
$2^{22}$&&  3.1  &  0.7 && $2^{31}$&&    45:13 &     5:52\\ 
$2^{23}$&&  6.3  &  1.1 && $2^{32}$&&  1:45:45 &    13:12\\
$2^{24}$&& 13.3  &  2.0 && $2^{33}$&&  4:24:50 &    32:51\\
$2^{25}$&& 28.2  &  4.2 && $2^{34}$&& 10:16:11 &  1:16:18\\
$2^{26}$&& 59.2  &  8.1 && $2^{35}$&& 23:15:58 &  2:52:47\\
$2^{27}$&& 126.2 & 16.6 && $2^{36}$&&          &  6:29:46\\
$2^{28}$&& 271.3 & 35.1 && $2^{37}$&&          & 14:44:33\\
$2^{29}$&& 578.0 & 74.5 && $2^{38}$&&          & 33:11:08\\
\bottomrule
\vspace{2pt}
\end{tabular}
\caption{$L$-series computations in genus 1 (elapsed times)}\label{table:Genus1FullTests}
\end{center}
\vspace{-12pt}
\begin{minipage}{1.0\linewidth}
\small
For the elliptic curve $y^2 = x^3+314159x+271828$, the coefficients of $L_p(T)$ were computed for all $p\le N$.  Columns labeled $\times n$ list total elapsed times (seconds or hh:mm:ss) for a computation performed on $n$ nodes (two cores per node), including communication overhead and time spent collating responses.  
\normalsize
\end{minipage}
\end{table}
\vspace{-12pt}
\begin{table}
\begin{center}
\begin{tabular}{@{}rcrrcrrcrr@{}}
%\toprule
& &\multicolumn{2}{c}{Genus 2}& &\multicolumn{2}{c}{Genus 3}& &\multicolumn{2}{c}{\!\!Genus 3 ($a_1$ only)}\\
\cmidrule(r){3-4}\cmidrule(r){6-7}\cmidrule(r){9-10}
$N$&$\qquad$& $\qquad\times 1$ & $\qquad\qquad\times 8$ &$\qquad$& $\qquad\times 1$& $\qquad\qquad\times 8$&
$\qquad$& $\qquad\times 1$& $\qquad\qquad\times 8$\\
\midrule
$2^{16}$& &        1 &       $<1$ & &       43 &       13 & &        1 &      $<1$\\
$2^{17}$& &        4 &        2 & &     1:49 &       18 & &        5 &        1\\
$2^{18}$& &       12 &        3 & &     4:42 &       41 & &       11 &        2\\
$2^{19}$& &       40 &        7 & &    12:43 &     1:47 & &       41 &        6\\
$2^{20}$& &     2:32 &       24 & &    36:14 &     4:52 & &     2:41 &       21\\
$2^{21}$& &    10:46 &     1:38 & &  1:45:36 &    13:40 & &    11:33 &     1:27\\
$2^{22}$& &    40:20 &     5:38 & &  5:23:31 &    41:07 & &    53:26 &     6:38\\
$2^{23}$& &  2:23:56 &    19:04 & & 16:38:11 &  2:05:40 & &  4:33:26 &    33:00\\
$2^{24}$& &  8:00:09 &  1:16:47 & &          &  6:28:25 & & 38:51:07 &  4:42:43\\
$2^{25}$& & 26:51:27 &  3:24:40 & &          & 20:35:16 & &          & 20:35:16\\
$2^{26}$& &          & 11:07:28 & &          &\\
$2^{27}$& &          & 36:48:52 & &          &\\
\bottomrule
\vspace{2pt}
\end{tabular}
\caption{$L$-series computations in genus 2 and 3 (elapsed times)}\label{table:FullTests}
\end{center}
\vspace{-12pt}
\begin{minipage}{1.0\linewidth}
\small
The coefficients of $L_p(T)$ were computed for the genus 2 and 3 hyperelliptic curves
\begin{align}\notag
y^2 &= x^5+31419x^3+271828x^2+1644934x+57721566;\\\notag
y^2 &= x^7+314159x^5+271828x^4+1644934x^3+57721566x^2+1618034x+141421,
\end{align}
for all $p \le N$ where the curves had good reduction.
Columns labeled $\times n$ list total elapsed wall times (hh:mm:ss) for a computation performed on $n$ nodes, including all overhead.  The last two columns give times to compute just the coefficient $a_1$.
\normalsize
\end{minipage}
\end{table}

\pagebreak
\bibliographystyle{amsplain}
%\bibliography{general}
\providecommand{\bysame}{\leavevmode\hbox to3em{\hrulefill}\thinspace}
\providecommand{\MR}{\relax\ifhmode\unskip\space\fi MR }
% \MRhref is called by the amsart/book/proc definition of \MR.
\providecommand{\MRhref}[2]{%
  \href{http://www.ams.org/mathscinet-getitem?mr=#1}{#2}
}
\providecommand{\href}[2]{#2}

\end{document}